\newtheorem{Thm}{Theorem}[section]
\newtheorem{Rmk}[Thm]{Remark}
\newtheorem{Def}[Thm]{Definition}
\def\bR {\mathbb{R}}
\def\dx {{\rm d}x}
\def\dt {{\rm d}t}
\def\R {\mathbb{R}}
\begin{document}

\title{A kinetic theory approach to consensus formation in financial markets}

\bibliographystyle{abbrv}

\author[J.-G. Attali]{Jean-Gabriel Attali}
\author[F. Salvarani]{Francesco Salvarani}
\address{J.-G.A.: De Vinci Higher Education, De Vinci Research Center, Paris, France}
\email{jean-gabriel.attali@devinci.fr}
\address{F.S.: De Vinci Higher Education, De Vinci Research Center, Paris, France \& Dipartimento di Matematica ``F.
Casorati'', Universit\`a degli Studi di Pavia, Italy}\email{francesco.salvarani@unipv.it}

\begin{abstract}
It is sometimes acknowledged that (sell-side) equity analysts' recommendations influence investors and therefore market prices. In particular, the S\&P 500 is expected to decline (respectively rise) when analysts revise their targets downward (respectively upward). Our findings indicate not only that analysts' consensus exert no influence on market prices, but also that, conversely, analysts appear to set their target prices based on markets prices.

 Employing a kinetic theory framework, we model the dynamics of analysts' opinions, by taking into account both the mutual influences shaping price consensus and the dynamics of the actual S\&P 500 index level.
The model is calibrated on a training subset of data and tested on an independent set to assess its predictive power.
Our tests show that just three free parameters are enough to accurately predict the one-year average price forecasts of analysts.
\end{abstract}

\keywords{ S\&P 500; Cointegration; Kinetic modeling; Forecast analysis. 
\textit{JEL Codes:} C63}

\maketitle

\section{Introduction} \label{intro}
%%%%%%%%%%%%%%%%%%%%%%%%%%%

Opinion dynamics play an important role in understanding the emergence of collective behaviors in markets, particularly under external influences such as price movements. Conventional perspectives often link price evolution to shifts in 
analysts' recommendations.  In contrast, our work investigates the reverse relationship: how 
analysts' recommendations 
 react to and closely follows price dynamics. This approach is essential for unraveling the feedback loops that drive decision-making within financial systems.

The application of methods from statistical mechanics to model social and economic systems has a long and successful history. 
This interdisciplinary approach was pioneered by Weidlich \cite{wei} and further popularized by Galam, Gefen, and Shapir \cite{gal-gef-sha}. 
By treating individuals as interacting entities and modeling their collective behavior, researchers have gained insights into complex social phenomena. 
 
Indeed, kinetic theory,
originally developed in statistical physics to describe the evolution of particle distributions in a gas, provides a natural framework for modeling the consensus formation process in an interacting community \cite{pareschi2013interacting}, \cite{albi2017recent}.

Kinetic models typically describe the time evolution of a probability distribution, capturing the stochastic nature of agent interactions and the emergence of macroscopic patterns from microscopic rules (see, for example, \cite{MR3579562}, \cite{bou-sal}, {\cite{BSS0},}
\cite{BSS}, \cite{hel93b}, \cite{hel93a}, \cite{hel94}, \cite{tos06}).

This paper investigates consensus formation within the field of financial analysis. Employed by financial institutions such as investment banks or brokerage firms, sell-side equity analysts are tasked with evaluating the financial performance of publicly traded companies. Their primary objective is to formulate investment recommendations for equity investors. These recommendations typically take the form of 'buy,' 'hold' (or 'neutral'), or 'sell' ratings, often accompanied by a one-year price target for the covered securities. It is crucial to note that equity analysts are not investors themselves; indeed, they are explicitly prohibited from trading in the shares of the companies they cover. Their market influence, therefore, is derived exclusively from the market's reaction to the dissemination of their recommendations, given that a substantial portion of investors act upon these advisories. 

In the context of financial forecasting, analysts continuously update their expectations based, for example, on new information, market trends, and (possibly) peer influence. This process can be viewed as an interaction-driven system, where opinions evolve through repeated exchanges and adjustments. Unlike deterministic approaches that 
{produce a single predicted}
value, kinetic models allow for a richer description by capturing the full distribution of expectations, thus accounting for heterogeneity in analysts' predictions. 

For instance, it allows us to compute higher-order moments such as the variance, as detailed in Section \ref{s:kin}, which are crucial for understanding uncertainty and predicting risk or volatility.

Furthermore, such models can describe key phenomena such as polarization, clustering of forecasts, and the convergence toward consensus.  

A kinetic approach is particularly well suited for studying the expected level of the S\&P 500 index at a one-year horizon, as it allows us to incorporate both endogenous factors, such as analysts' interactions, and exogenous influences, including macroeconomic shocks and financial news. By extending methods from kinetic theory to this setting, we aim to provide a realistic and flexible model for predicting 
analysts' recommendations,  ultimately improving our understanding of how collective financial expectations evolve over time.

Specifically, we focus on the one-year consensus forecasts of analysts aggregated across all stocks belonging in the S$\&$P 500 index. We first demonstrate that the S$\&$P 500  and its consensus target price, more precisely, the logarithms of these two series, are cointegrated. This property is easily confirmed by using Engle-Granger procedure \cite{engle}. The short-run dynamics is also investigated  thanks to the 

Johansen-Juselius multivariate cointegration technique \cite{jorank}, \cite{jo} and \cite{juselius}.

While it is not unexpected that these two series share a common trend, we find, surprisingly, that Granger causality test leads to the conclusion that the S$\&$P 500 level causes the consensus target price, but not vice versa. This result appears to be contradictory with several studies on the impact of analyst revisions on prices (see for instance \cite{brav}, \cite{park}).  It is important to note, however, that these authors focus on analyst earnings revisions, while we examine their ability to forecast prices at a one-year horizon. Furthermore, these studies focus on individual stocks, while we are interested in the aggregate forecast for the entire S$\&$P 500. 

Consequently, a central hypothesis of our study, while designing the kinetic model, is that the 
{analysts'}
consensus
does not predict or influence market movements, but rather follows them. Our model assumes that two main factors drive the analysts' opinions: the influence of other analysts (described as a mean-field interaction) and the actual price of the index. This framework allows us to explore the interplay between these two effects and their impact on opinion formation. 

Furthermore, our analysis shows that the influence of the mean-field becomes negligible over time, leaving the current price as the sole determinant of the average one-year forecasts. 
Hence, the average of the analysts' one-year forecasts on the S\&P 500 index can be {modeled} using only three fixed parameters: $\Delta$, representing the {relative premium applied as a proportional adjustment to the current index level}; $q$, a 
{convex combination} coefficient 
{weighting the} current index value $X(t)$,
{augmented by a spread,} and the analyst's 
{prior forecast}; and $\beta$, 
{modulating the relative influence of the index dynamics with respect to the mean-field consensus.}

{To validate the robustness of our approach, we divided the dataset into two parts: a training set, used to estimate the model's parameters, and a testing set, used to evaluate its predictive performance on data not seen during calibration.}

By demonstrating the reactive nature of the consensus forecast and quantifying their dependence on price movements, this work contributes to a deeper understanding of opinion dynamics in financial markets.

%%%%%%%%%%%%%%%%%%%%%
\section{Exploratory data analysis based on Engle and Granger approach}
\label{s:data}

\subsection{Data}

Our study uses the one-year target prices for S$\&$P 500 stocks published by Bloomberg. These targets are provided by equity analysts at banks and brokerages. 

Bloomberg aggregates this data and distributes it to market participants through its terminal. The company then calculates a one-year consensus forecast for each stock, using the average of all individual target prices. Bloomberg finally computes the sum of one-year consensus for S\&P 500 stocks weighted by the weight of each stock in the index to obtain a one-year consensus forecast for the level of the S\&P 500 based on analysts' recommendations. 
 Equity analysts are typically required to publish reports on a regular basis, at least once per quarter. Nevertheless, they also provide commentary whenever events are expected to influence the price of the stocks they cover. As a result, an analyst may revise his target price several times within a year. More than 30 banks and brokerage firms contribute to Bloomberg, generally the biggest ones. Given that the S$\&$P 500 is composed of 500 stocks, the aggregate target price is subject to daily fluctuations, although the magnitude of these changes may be small.
 The daily data analyzed in this article cover the period from May 26th, 2009, to  June 2nd, 2025.

\begin{figure}
\begin{center}
\includegraphics[height=9cm]{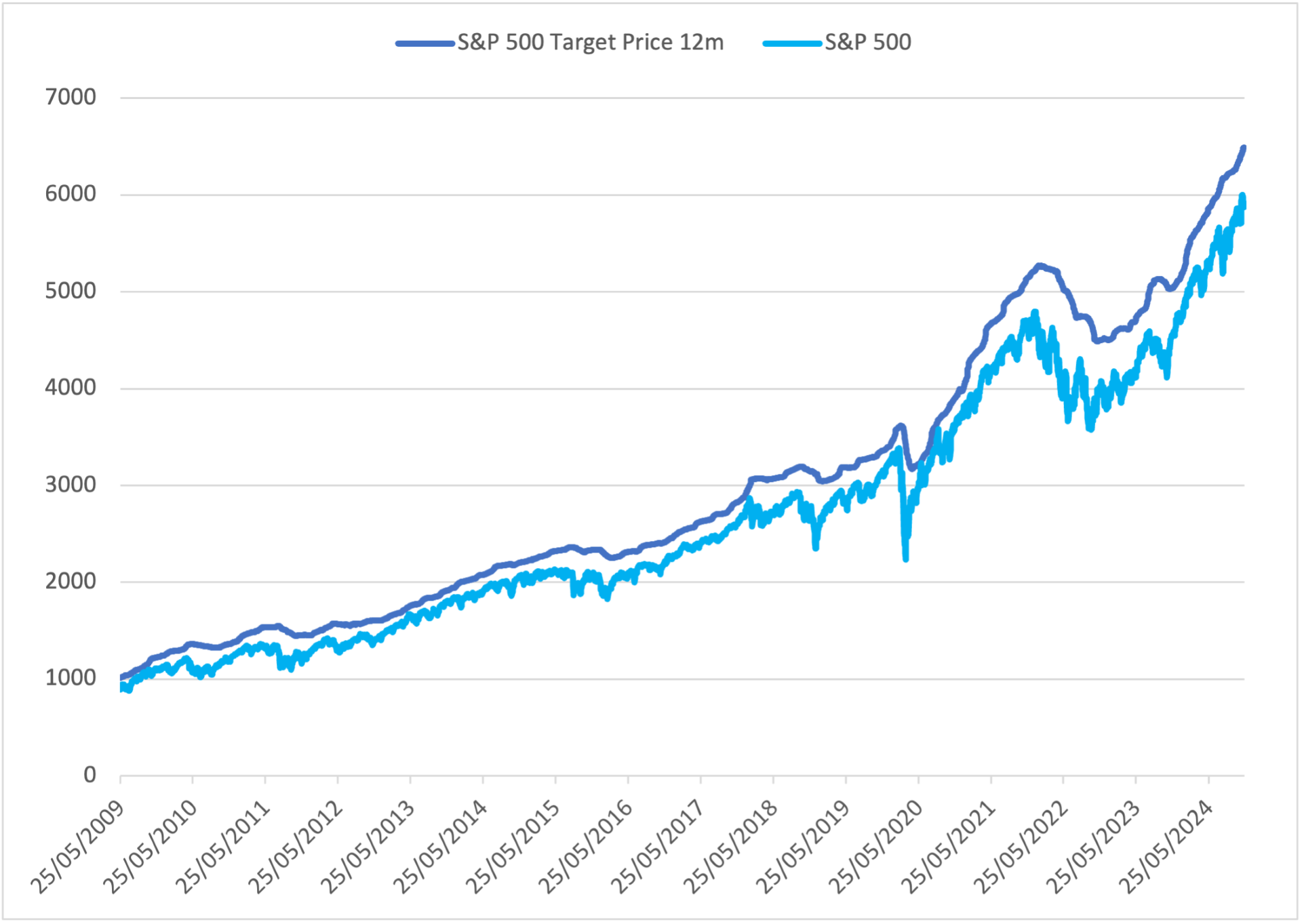}
\caption{
Time series of the S\&P 500 index $X(t)$ 
{(dark grey line)} and the one-year analysts' forecasts for the S\&P 500 index
{(grey line)}, from May 26th, 2009, to  {June 2nd, 2025}.
Source: Bloomberg} 
\label{fig:01}
\end{center}
\end{figure}

In the remainder of this paper, $X(t)$ will denote the level of the S\&P 500 at time $t$ while $s_{\text{meas}}(t)$ will be the one year consensus target price computed by Bloomberg for the S\&P 500. 

\subsection{Test for stationarity}
 
It is quite obvious that $\ln (X(t))$ and $\ln (s_{\text{meas}}(t))$ are nonstationary and, indeed, stationarity is rejected by the Augmented Dickey-Fuller test at any confidence level (cf table \ref{tableadf1}).

\begin{table}
\begin{center}
\caption{Augmented Dickey-Fuller test for $\ln (X(t))$ and $\ln (s_{\text{meas}}(t))$}
\label{tableadf1}
\begin{tabular}{lccc} \hline
 &  & t-statistic & Prob \\ \hline
ADF test statistics for $\ln (X(t))$ &  & 0.74 & 0.34 \\
ADF test statistics for $\ln (s_{\text{meas}}(t))$ &  & 2.35 & 0.62 \\
\hline
Test critical values & 1\% level &  -2.58 \\
Test critical values & 5\% level &  -1.95 \\
Test critical values & 10\% level &  -1.62 \\
\hline
\end{tabular}
\end{center}
\end{table}

For a discrete time series $Y(t)$, define the first difference $DY(t)=Y(t)-Y(t-1)$. This time, $D\ln (X(t))$ and $D\ln (s_{\text{meas}}(t))$ are accepted to be stationary according to the ADF test (cf table \ref{tableadf2}).

\begin{table}
\begin{center}
\caption{Augmented Dickey-Fuller test for $D\ln (X(t))$ and $\ln (s_{\text{meas}}(t))$}
\label{tableadf2}
\begin{tabular}{lccc} \hline
 &  & t-statistic & Prob \\ \hline
 ADF test statistics for $D\ln (X(t))$  &  &  -8.26  & $<$0.01    \\ 
 ADF test statistics for $D\ln (s_{\text{meas}}(t))$  &  &  -4.62  &  $<$ 0.01      \\ 
\hline
Test critical values & 1\% level &  -2.58  &     \\ 
Test critical values & 5\% level &  -1.95  &     \\ 
Test critical values & 10\% level &  -1.62  &     \\ 
\hline
\end{tabular}
\end{center}
\end{table}

Finally, the time series $\ln (X(t))$ and $\ln (s_{\text{meas}}(t)$ are accepted as $I(1)$ (integrated of order 1) because they are non-stationary series whose first differences are stationary (or $I(0)$).

\subsection{Cointegration test}

Now, we investigate the cointegration between the quantities $\ln (X(t))$ and $\ln (s_{\text{meas}}(t))$, which is a linear relation linking theses series. In that particular case, the cointegration relationship would be defined as follows:

\begin{Def}[]\label{Cointegration}
 Suppose discrete time series $Y(t)$ and $Z(t)$ are $I(1)$. Then $Y(t)$ and $Z(t)$ are said to be cointegrated (of order 1) if there exist $\alpha$, $\beta$ and a time series $u(t)$ which is $I(0)$ such that:
\begin{equation}\label{lt} Y(t)=\alpha Z(t) +\beta +u(t).\end{equation}
\end{Def}

\begin{Rmk} Concepts of integration and cointegration can easily be generalized to higher order but we don't need this general framework here.
\end{Rmk}

Usually, one cannot use the Ordinary Least Square method to estimate parameters in the  cointegration relationship
(\ref{lt}) for it leads to false conclusions. This issue has first been exhibited by Yule (see \cite{yule}). Decades later, Granger and Newbold investigated this misprocedure more deeply \cite{granger} and baptised it ``spurious regression''. Thereafter, Engle $\&$ Granger (\cite{engle}) proposed the following procedure to investigate cointegration and estimate the parameters whenever $Y(t)$ and $Z(t)$ are proved to be $I(1)$:

\begin{itemize}
\item[-] First, estimate the (long-run) relationship (\ref{lt}) between $Y$ and $Z$ by computing OLS for the linear regression of $Y$ on $Z$ (even though it is not allowed). 
\item[-] Then compute $e(t)=Y(t)-\widehat{\alpha}Z(t) -\widehat{\beta}$.
\item[-] At last, test stationarity for $e(t)$ by using Augmented-Dickey Fuller test. If stationarity is accepted, then $Y(t)$ and $Z(t)$ are cointegrated and $\widehat{\alpha}$ and $\widehat{\beta}$ are  convergent estimators for $\alpha$ and $\beta$.

\end{itemize}

The estimation of the linear regression $\ln (s_{\text{meas}}(t)) = \alpha\ln (X(t))+
\beta +u(t)$ leads to $\widehat{\alpha}=0.98679$ and $\widehat{\beta}=0.21778$. Augmented Dickey-Fuller test allows us to accept stationarity for $e(t)=\ln (s_{\text{meas}}(t))-0.98679\ln (X(t)) - 0.21778$ (cf table \ref{tableadf3}).

\begin{table}
\begin{center}
\caption{Augmented Dickey-Fuller test applied to the residuals from the regression  $\ln (s_{\text{meas}}(t))$ on $\ln (X(t))$}
\label{tableadf3}
\begin{tabular}{lccc} \hline
   &  & t-statistic & Prob \\ \hline
ADF test statistics for $e(t)$  &  &  -2.82  & $<$0.01       \\ 
\hline
Test critical values & 1\% level &  -2.58  &     \\ 
Test critical values & 5\% level &  -1.95  &     \\ 
Test critical values & 10\% level &  -1.62  &     \\ 
\hline
\end{tabular}
\end{center}
\end{table}

This leads to the conclusion that $\ln (X(t))$ and $\ln (s_{\text{meas}}(t))$ are cointegrated. This appears clearly by looking at  figure \ref{fig:02} which compares $\ln (s_{\text{meas}}(t))$ with its estimated long-term equilibrium  level $\alpha\ln (X(t))+
\beta$.

\begin{figure}
\begin{center}
\includegraphics[height=9cm]{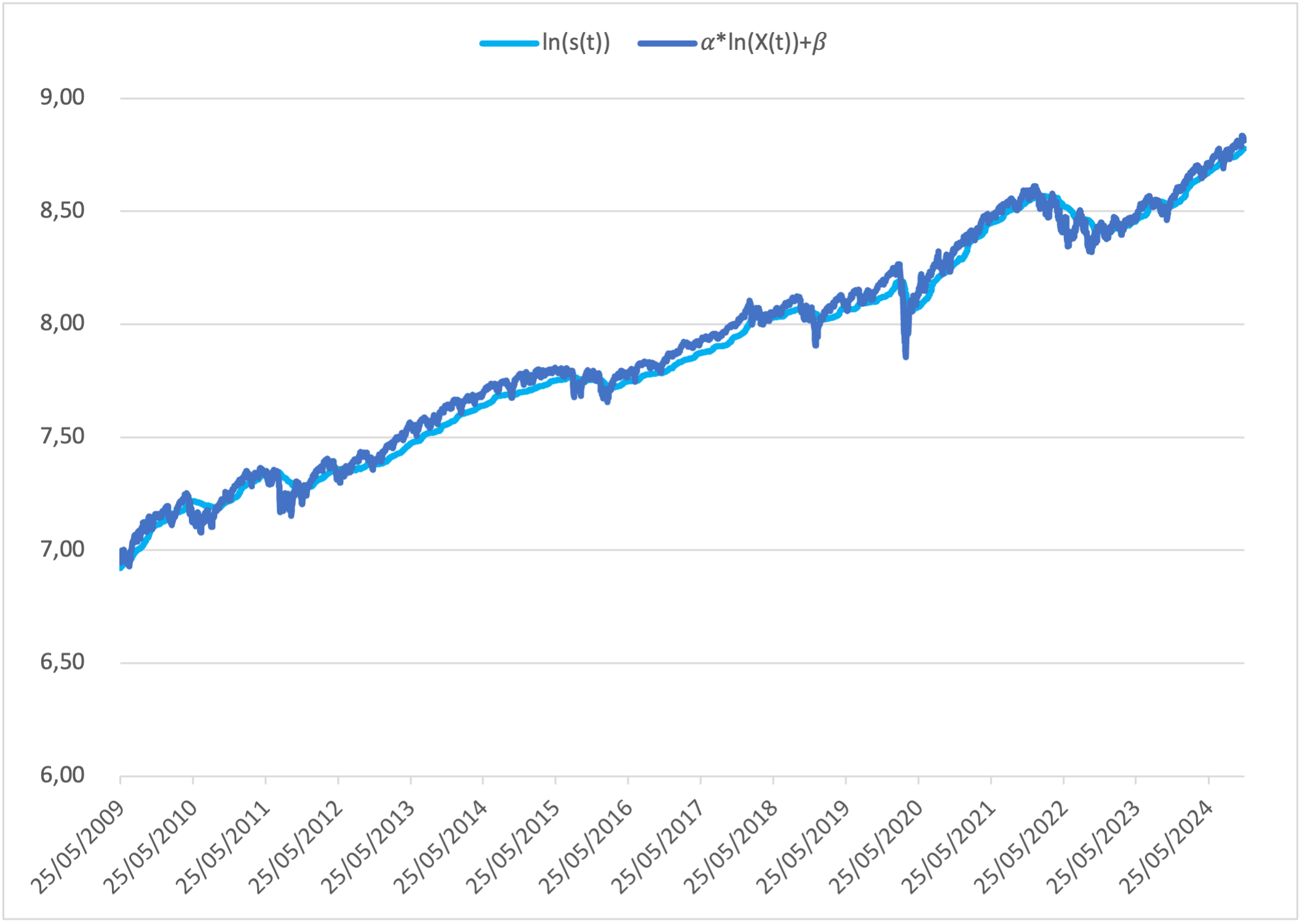}
\caption{
Logarithm of one-year analysts' forecasts for the S\&P 500 index {(dark grey line)} vs its long-term equilibrium level
{(grey line)}, from May 26th, 2009, to  {June 2nd, 2025}.} 
\label{fig:02}
\end{center}
\end{figure}

\subsection{Short-run relationship}

One major result regarding cointegration theory is the {Granger-Engle Representation Theorem} \cite{engle}:

\begin{Thm}\label{Cointegration2}
 Suppose $Y(t)$ and $Z(t)$ are cointegrated at the order 1. Then there exists  an {error-correction representation} i.e. an (error-correction) model of the form:
\begin{equation}\label{ct} D Y(t) =  \sum_{i=1}^p a_i DY(t-i) +\sum_{j=1}^p b_j DZ(t-j)+ \gamma \left(Y(t-1)-\alpha Z(t-1) -\beta\right)+\varepsilon(t),
\end{equation}
where $\varepsilon(t)$ is $I(0)$.

\end{Thm}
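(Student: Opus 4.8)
The plan is to treat this as the classical Granger--Engle representation theorem and to recover the scalar error-correction equation from the reduced-rank structure of a vector autoregression written in the levels. First I would collect the two series into the bivariate vector $W(t)=(Y(t),Z(t))^{\top}$ and record the two facts supplied by the hypotheses: since $Y$ and $Z$ are $I(1)$, the differenced vector $DW(t)=W(t)-W(t-1)$ is $I(0)$; and since the pair is cointegrated, the combination $u(t)=Y(t)-\alpha Z(t)-\beta$ is, up to scaling, the unique stationary linear combination of the levels, with cointegrating vector $c=(1,-\alpha)^{\top}$.

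Next, starting from a finite-order vector autoregression $W(t)=\sum_{k=1}^{p+1}\Pi_k W(t-k)+\varepsilon(t)$ for the levels, I would apply the standard reparametrization that isolates the level term,
$$DW(t)=\Pi\,W(t-1)+\sum_{i=1}^{p}\Gamma_i\,DW(t-i)+\varepsilon(t),$$
where $\Pi=-\bigl(I-\sum_{k}\Pi_k\bigr)$ and each $\Gamma_i$ is an explicit partial sum of the $\Pi_k$. The crux of the argument is then a balancing-of-orders observation: every term in this identity except possibly $\Pi\,W(t-1)$ is $I(0)$, so $\Pi\,W(t-1)$ must be $I(0)$ as well. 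But $W(t-1)$ is $I(1)$, and by the uniqueness of the cointegrating direction the only linear functionals of $W$ that are stationary are multiples of $c^{\top}$ (modulo the constant $\beta$). Hence each row of $\Pi$ is proportional to $c^{\top}$, i.e. $\Pi=g\,c^{\top}$ for some loading vector $g=(\gamma,\gamma_2)^{\top}$, so that $\Pi\,W(t-1)=g\,(Y(t-1)-\alpha Z(t-1))$; absorbing the VAR intercept reproduces the error-correction term $\gamma\bigl(Y(t-1)-\alpha Z(t-1)-\beta\bigr)$.

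Reading off the first coordinate of the reparametrized system then yields exactly equation (\ref{ct}): $DY(t)$ equals the lagged differences $\sum_i a_i DY(t-i)$ and $\sum_j b_j DZ(t-j)$ coming from the first rows of the $\Gamma_i$, plus the error-correction term $\gamma(Y(t-1)-\alpha Z(t-1)-\beta)$, plus the $I(0)$ innovation $\varepsilon(t)$ (the first component of the vector noise).

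I expect the genuinely hard part to be the step I glossed over at the outset, namely the legitimacy of starting from a finite-order autoregression in the levels. In the fully general formulation one begins instead from the Wold / moving-average representation of the stationary increments, $DW(t)=C(L)\varepsilon(t)$, and must show that cointegration is equivalent to a rank deficiency of $C(1)$ and that the process can then be inverted into the autoregressive error-correction form. Carrying this out rigorously requires the Smith--McMillan factorization of the matrix polynomial $C(L)$ around the unit root $z=1$ together with a careful accounting of its pole/zero structure there; this inversion, rather than the algebraic reduced-rank bookkeeping, is the technical heart of the theorem. For the present bivariate, single-cointegration setting, however, the reduced-rank reparametrization above is enough to exhibit the representation claimed.
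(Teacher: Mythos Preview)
The paper does not prove this theorem at all: it is stated as a classical result from the literature and attributed to Engle and Granger \cite{engle}, with no argument given. So there is no ``paper's own proof'' to compare your proposal against.

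That said, your sketch is a faithful outline of the standard argument for the representation theorem in the finite-VAR setting: the reparametrization of the level VAR into the $\Pi\,W(t-1)+\sum_i\Gamma_i\,DW(t-i)$ form, the balancing-of-integration-orders step forcing $\Pi W(t-1)$ to be stationary, and the reduced-rank conclusion $\Pi=g\,c^{\top}$ are exactly the ingredients one expects. You are also right that the honest difficulty lies in justifying the finite-order VAR starting point (or, equivalently, in inverting the Wold representation of $DW$ under the rank condition on $C(1)$), and that for the bivariate single-relation case used in this paper the algebraic reduced-rank argument suffices. Nothing in your proposal is wrong; it simply goes well beyond what the paper itself supplies.
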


\begin{Rmk} 
{The name of Model (\ref{ct}) stems from the fact that the error in the cointegration relationship at time $t-1$ 
is corrected at time 
$t$ by the restoring force 
$\gamma$, which must be negative.}
The more the  ``error" at time $t-1$ in the cointegration relationship is large, the stronger the restoring force is.
\end{Rmk}

One can notice that there  also exists an error correction model using $DZ(t)$ as the dependent variable since Equation (\ref{lt}) can be written:
$$Z(t)=\frac{1}{\alpha} Y(t) -\frac{\beta}{\alpha} -\frac{1}{\alpha}u(t).$$ 
Accordingly, we need to estimate two short-term equations simultaneously. To investigate this further, we will use the Johansen procedure, as detailed in the following section.

\begin{Rmk} The long-run relationship in the same in both  short-term models, $s_{\text{meas}}(t)$ being the response variable. Consequently, $\gamma'$ estimated below will remain negative while $\gamma$ will become positive. 
\end{Rmk}

\section{Johansen procedure}

We use the Johansen procedure with Maximum Likelihood Estimation (MLE) to determine the parameters of the Vector Error Correction Model (VECM). This estimation is performed in the \texttt{R} programming environment using {\texttt {cajorls}} and {\texttt{vecm}} functions. A crucial initial step is testing the rank, $r$, of the VECM's long-run coefficient matrix $\Pi$. This rank $r$ represents the number of distinct cointegration relationships. The Johansen procedure is preferred over the Engle-Granger approach because it can simultaneously handle multiple variables and relationships. However, since our analysis is limited to only two variables,  ($\ln (s_{\text{meas}}(t))$ and $\ln (X(t))$), we can have at most one cointegration relationship $(r\leq 1)$.

\subsection{Tests for lags and cointegration}

Based on three out of four information criteria (Akaike, Schwarz, Hannan-Quin and Final Prediction error), the optimal number of lags 
$p$ for short-run dynamics is estimated to be 9 (see Table \ref{table0}). This is therefore the value that will be retained to estimate the model.

\begin{table}
\begin{center}
\caption{Optimal number of lags for each criterion} 
 \label{table0}
\begin{tabular}{cccc} \hline
     AIC & SIC & HQ & FPE \\\hline
 9 & 5 &  9  & 9     \\ 
\hline
\end{tabular}
\end{center}
\end{table}

The need for a large number of lags stems from the large residual outliers observed in some sub-periods of the cointegration system. This is a recognized characteristic of equity markets: their frequent experience of volatility regime changes. However, we confirmed that the optimal number of lags decreases with the sample size; for instance, restricting the analysis to the first three years of historical data only requires four lags, as reported in Table \ref{table3Y}.

\begin{table}
\begin{center}
\caption{Optimal number of lags for each criterion (2009-2012) } 
 \label{table3Y}
\begin{tabular}{cccc} \hline
     AIC & SIC & HQ & FPE \\\hline
 4 & 3 &  4  & 4      \\ 
\hline
\end{tabular}
\end{center}
\end{table}

If the estimation is restricted to the first year of the historical data, the necessary number of lags is lowered even further, as reported in Table \ref{table1Y}."

\begin{table}
\begin{center}
\caption{Optimal number of lags for each criterion (2009-2010) } 
 \label{table1Y}
\begin{tabular}{cccc} \hline
     AIC & SIC & HQ & FPE \\\hline
  2 & 2 &  2  & 2     \\ 
\hline
\end{tabular}
\end{center}
\end{table}

To test the number of cointegration relationship, there are two methods (trace and maximum eigenvalue), but again, these are identical if we only deal with two variables. Now, the cointegration rank test lead to the result reported in table (\ref{rank}).
\begin{table}
\begin{center}
\caption{Johansen rank test}
 \label{rank}
\begin{tabular}{lcccc} \hline
 & test  & 10\%    & 5\%  & 1\%   \\\hline
$r\leq 1$ & 11.81  &  7.52  & 9.24 & 12.97     \\ 
$r=0$ & 178.74 &  13.75  & 15.67 & 20.20    \\
\hline
\end{tabular}
\end{center}
\end{table}

Since we cannot reject the presence of a single cointegrating relationship at the 1$\%$ significance level, and since both time series were previously established as $I(1)$ variables, we have strong grounds to conclude that a long-run cointegrating relationship exists.

\subsection{Estimates}

Even though the Johansen procedure allows for the estimation of both the short-run and the long-run relationships in one step, we are going to comment on each relationship individually.

\subsubsection{Long-term relationship }

The estimation leads to:

$$\ln (s_{\text{meas}}(t))-0.99186 \ln (X(t)) -0.20574\leadsto I(0).$$

This result is very close to the estimate obtained earlier using the 
{simpler} Engle-Granger approach. It is this term that provides the mechanism to correct for errors in the short-run model.

\subsubsection{Short-run relationship with $D \ln (X(t))$ as the dependent variable}

%This equation is written:
{The equation is the following:}

$$D\ln \left(X(t)\right) =\sum_{i=1}^9 a_iD\ln (X(t-i)) +\sum_{j=1}^9 b_j D\ln (s_{\text{meas}}(t-j))+ \gamma\left(\ln (s_{\text{meas}}(t-1))-\alpha \ln (X(t-1)) -\beta\right)+\varepsilon(t).$$
None of the coefficients of the past level of $D\ln (s_{\text{meas}}(t))$ (i.e. from $b_1$ to $b_9$) are significant (see Table \ref{table1} below). Hence $\ln (s_{\text{meas}}(t))$ does not (Granger-)cause $\ln (X(t))$. The $R^2$ is only $2\%$, which is very small. We can deduce that the short-term dynamics of the market is not impacted at all by the consensus
{price}. One can remark, though, that there is a bit of autocorrelation for $\ln (X(t))$  (e.g. $a_1$ is significant).

\begin{table}
\begin{center}
\caption{Estimates of the  short-term equation for {the} response variable $D\ln (X(t))$ \newline
***, **, *, $\cdot$ indicate statistical significance at $0.1\%$, $1\%$, $5\%$, $10\%$ levels respectively}
 \label{table1}
\begin{tabular}{cccc} \hline
  Parameter & Estimate & Std & $t$-stat  \\\hline
$\gamma$ & 0.002183 &  0.005223  & 0.418     \\ 
$a_1$ &  -0.087234  & 0.015780 & -5.528***  \\ 
$a_2$  &  0.053540  & 0.015902  & 3.367*** \\ 
$a_3$ &  -0.028266 &  0.015948 &  -1.772 $\cdot$\\ 
$a_4$ &  -0.025171  & 0.015985 & -1.575    \\ 
$a_5$ &  -0.010029 &  0.016015 & -0.626   \\ 
$a_6$ &  -0.063941 &  0.016094 & -3.973***   \\ 
$a_7$  &  0.054246  & 0.016146  & 3.360***   \\ 
$a_8$ &  -0.044517  & 0.016156 &  -2.755**   \\ 
$a_9$   & 0.060541  & 0.016119 &  3.756***\\ 
$b_1$ & -0.106482 &  0.208119 & -0.512    \\  

$b_2$  & 0.190598  & 0.220288 &  0.865 \\ 

$b_3$  & 0.179068 &  0.220953 &  0.810 \\ 

$b_4$  & -0.024094 &  0.221821 & -0.109 \\ 

$b_5$  & 0.226850 &  0.221784  & 1.023 \\ 

$b_6$  & -0.218241 &  0.221501 &  -0.985 \\ 

$b_7$  & 0.131186 &  0.220381 &  0.595 \\ 

$b_8$ & -0.266791  & 0.219320  & -1.216 \\ 

$b_9$  & -0.138858 &  0.205359 & -0.676 \\ 
\hline
\end{tabular}
\end{center}
\end{table}

\subsubsection{Short-run relationship with $D\ln (s_{\text{meas}}(t))$ as the dependent variable}

{The studied equation is the following:}

$$D\ln (s_{\text{meas}}(t)) =\sum_{i=1}^9 a'_i D\ln (s_{\text{meas}}(t-i)) +\sum_{j=1}^9 b'_jD\ln(X(t-j))+ \gamma'\left(\ln (s_{\text{meas}}(t-1))-\alpha \ln (X(t-1)) -\beta\right)+\varepsilon'(t).$$

The short-term equation with $D\ln (s_{\text{meas}}(t))$ as the dependent variable is reported in  Table \ref{table2}. The $R^2$ of the model is around $60\%$, wich is quite good, and most of the coefficients are significant.

\begin{table}
\begin{center}
\caption{Estimates of the  short-term equation for  the response variable $D\ln (s_{\text{meas}}(t))$\newline
***, **, *, $\cdot$ indicate statistical significance at $0.1\%$, $1\%$, $5\%$, $10\%$ levels respectively}
 \label{table2}
\begin{tabular}{cccc} \hline
  Parameter & Estimate & Std & $t$-stat  \\\hline
$\gamma'$ & -0.0047212 &   0.0003967  & -11.901***   \\ 
$a'_1$ &  0.3476735 & 0.0158070 & 21.995 ***  \\ 
$a'_2$  &  0.0829855 & 0.0167312  & 4.960 *** \\ 
$a'_3$ &  0.0978602 & 0.0167817 &  5.831 *** \\ 
$a'_4$ &  0.0643269 & 0.0168477 &  3.818 ***    \\ 
$a'_5$ &  0.0582016 & 0.0168449 &  3.455 ***  \\ 
$a'_6$ &  0.0195366 & 0.0168233 &  1.161       \\ 
$a'_7$  &  0.0014429 & 0.0167383 &  0.086       \\ 
$a'_8$ &  -0.0271617 & 0.0166577   & -1.631       \\ 
$a'_9$   & 0.0157422 & 0.0155973  & 1.009  \\ 
$b'_1$ &  0.0074338  & 0.0011985 &  6.202 ***    \\  

$b'_2$ & 0.0063950 & 0.0012078 &  5.295 *** \\ 

$b'_3$ & 0.0066243 &  0.0012113 &  5.469 *** \\ 

$b'_4$  & 0.0050619 & 0.0012141 &  4.169 *** \\ 

$b'_5$ & 0.0080997 & 0.0012164  & 6.659 *** \\ 

$b'_6$  & 0.0045694  & 0.0012224 &  3.738 ***\\ 

$b'_7$  & 0.0043499 & 0.0012263 &  3.547 *** \\ 

$b'_8$ & 0.0053319 & 0.0012271  & 4.345 ***\\ 

$b'_9$  & 0.0061260 & 0.0012243  & 5.004 ***\\ 
\hline
\end{tabular}
\end{center}
\end{table}

This equation is the most relevant one, as it demonstrates that the consensus target price for the S$\&$P 500 adjusts based on the market level, rather than the other way around. As a result, the consensus 
{(i.e. the one-year target price)}
acts more as a ``market follower" than a ``market mover" in the short term.

\begin{Rmk}  One can remark that $\alpha$ in the long-run equation is near 1. The Augmented Dickey-Fuller test for $\ln (s_{\text{meas}}(t))-\ln (X(t))$ leads to the conclusion that one can accept that $\alpha$ is equal to 1, which, again, is no surprise when looking at Figure \ref{fig:01}.
\end{Rmk}

\begin{Rmk} The  half-life of the second short-run relationship is the number of periods that are required to correct half the deviation from equilibrium in the long-run relationship. It is computed by $t$ such that $$(1+\gamma')^t=\frac{1}{2}$$
which yields to $t=147$ trading days, i.e. between 6 and 7 months. 
\end{Rmk}

\subsection{Diagnostic tests for residuals}

To perform the diagnostic tests, we first need to convert the VECM (Vector Error Correction Model) into its CVAR (Cointegrated Vector Autoregression) form. We use the \texttt{vec2var} procedure in \texttt{R} to complete this transformation and retrieve the necessary residuals

\subsubsection{Serial Correlation}

The Breusch-Godfrey serial correlation test of the residuals (see table \ref{correlation} below) confirms the absence of autocorrelation, validating that the lag length in the model was correctly specified.

\begin{table}
\begin{center}
\caption{Breusch-Godfrey test}
 \label{correlation}
\begin{tabular}{cc} \hline
  Statistic  & p-value  \\\hline
 49.636 & 0.14     \\ 
\hline
\end{tabular}
\end{center}
\end{table}

\subsubsection{Check for normality}

In contrast, the normality assumption for residuals is rejected (see table \ref{jb}). This is a known result, at least regarding the log returns of the S$\&$P 500. Indeed, even though this assumption is implicit in most financial models (Markowitz Optimization, Black-Scholes Equation), it is generally not met for stock returns. Nevertheless, given our large sample size, the Central Limit Theorem is relied upon to ensure the validity of our inferences.

\begin{table}
\begin{center}
\caption{Jarque-Bera test}
\label{jb}
\begin{tabular}{cc} \hline
  Statistic  & p-value  \\\hline
 76932 & 0.0000      \\ 
\hline
\end{tabular}
\end{center}
\end{table}

\subsubsection{Arch effect}

Conversely, heteroscedasticity in the residuals is observed (see Table \ref{arch}). This issue is likely attributable to the volatility regime changes in equity markets, the same phenomenon that necessitated retaining a large number of lags in the model. Ideally, the equilibrium model should be estimated on a rolling basis and re-estimated regularly. However, even though this procedure improves the test statistics for ARCH effect, the homoscedasticity assumption is consistently rejected, regardless of a reduced sample size. This persistence is arguably the model's main weakness. Despite this limitation, the approach proved valuable for understanding the underlying dynamics of consensus formation, allowing us to successfully calibrate the kinetic model (see next section), which inherently requires no assumptions about the distribution of log returns or homoscedasticity.

\begin{table}
\begin{center}
\caption{Heteroscedasticity test  for residuals}
\label{arch}
\begin{tabular}{cc} \hline
  Statistic  & p-value  \\\hline
 1613 & 0.0000       \\ 
\hline
\end{tabular}
\end{center}
\end{table}

%%%%%%%%%%%%%%%%%%%%%%%%%%%
\section{The kinetic model} \label{s:kin}
%%%%%%%%%%%%%%%%%%%%%%%%%%%

The kinetic-type model described in this Section predicts the behavior of the
{analysts' recommendations on the one-year} S\&P 500 index by supposing that the influence on the
index forecasts is governed by a small number of inputs, namely the opinions of the other agents (consensus), the market suggestions, and the degree of attractiveness of the share price.

The model's independent variables are time, $t \in \mathbb{R}^+$, and the one-year price forecast for a given agent, denoted by $x \in \R_+$.

The agents' community is described by the distribution function $f = f(t, x) $, defined on $\mathbb{R}^+ \times \bar{\Omega} $. This function represents the density of agents holding price forecast $x $ at time $t $. Its evolution is governed by a kinetic equation, encompassing two levels of interaction: the microscopic level, where opinions on the price forecasts are influenced by the mean field and external market forces, and the macroscopic level, which captures the evolution of $f(t, x) $ induced by these microscopic interactions.

\subsection*{Interaction with the price dynamics}

The influence of price dynamics on the one-year price forecast evolution is governed by four primary features:

\begin{itemize}
\item[-] Strength of the consensus on the forecast. It is described by a time-dependent parameter $\alpha \in \mathbb{R}^+$ that quantifies the intensity of the influence exerted by the forecast of the other agents on the evolution of one-year price forecasts.

\item[-] The share price. It is described by a function $X(t): \mathbb{R}^+ \to \bar{\Omega}$. The perfect knowledge of the share price reflects changes in the market environment and serves as the primary external factor affecting individual opinions on price forecasts.

\item[-] Attractiveness. The attractiveness is described by a parameter $q \in (0,1)$ that quantifies the degree to which a one-year price forecast $x $ is drawn toward the share price $X(t)$ {augmented by a premium}.

\item[-] Tendency to consensus. This feature implies that opinions depend not only on the attractiveness of the share price at time $t$, but also on the mean field of price opinions.
\end{itemize}

The central mechanism of the model lies in the interaction rule, which reflects the natural tendency of opinions to converge while being influenced by price dynamics. Following \cite{BSS}, we describe the evolution of opinions as:
\begin{equation}
\label{collisions_media}
\bar{x} =  (1-q)x + q X(t)(1+\Delta),
\end{equation}
where $\Delta \in \mathbb{R}$ is {the} spread that encapsulates the deviation of forecasts from the actual market price.

We assume that the time dynamics of $X(t) $ is known. 
For technical reasons, we suppose moreover that the share price is a continuous and uniformly bounded function of time.

Inverting the interaction rule in \eqref{collisions_media}, we obtain:
$$
x = \frac{1}{1-q}\bar{x} - \frac{q}{1-q} (X(t) + \Delta).
$$

%%%%%%%%%%%%
\subsection*{Time Evolution of $f $}
%%%%%%%%%%%%%

We first introduce the 
{
function $s \,:\, \R^*_+\to  \R_+$, which describes the average recommendation}
%sentiment 
of the 
{analysts'} population on the target price, defined as:
\begin{equation}
\label{e:s_old}
s(t) = \frac{\displaystyle \int_{\R_+} x f(t, x) \, \mathrm{d}x}{\displaystyle  \int_{\R_+} f(t, x) \, \mathrm{d}x}.
\end{equation}
{In what follows, this quantity will sometimes be indicated as ``the sentiment''.}

The time evolution of the opinion distribution $f $ is governed by a non-local partial differential equation.
In its classical form, the model is the following:
\begin{equation}
 \label{e:model}
 \left\{
 \begin{array}{l}
\displaystyle \partial_t f + \alpha \partial_x \big((s(t) - x)f\big)=
\displaystyle \beta \left [\frac 1 {1-q} f\left (\frac x {1-q}  -\frac q{1-q} X(t)(1+\Delta)\right)\mathbf{1}_{\left (\frac x {1-q}  -\frac q{1-q} X(t)(1+\Delta)\geq 0 \right)}
 -f(x)\right]\\[10pt]
f(0, x) = f^{\text{in}}(x) \qquad x \in {\R^*_+} \\[10pt]
f(t,0)=0 \qquad t \in {\R^*_+},\\[10pt]
 \end{array}
 \right.
\end{equation}
where $\alpha\in\bR_+^*$ is a parameter which describes how strong is the comparison with 
{$s$} and $\beta\in\bR_+^*$ is the characteristic relaxation time to the spot price.

%%%%%%%%%%%%
\subsection*{The {semi-}weak formulation}
%%%%%%%%%%%%%

It is convenient to express and to study the previous model in semi-weak form. 
Let 
{$\phi\in 
C^1(\R)$ and}
suppose that
$$
\lim_{x\to 0^+} \phi(x)f(t,x)=
\lim_{x\to +\infty} \phi(x)f(t,x)=0 \text{ for all }t\in\R_+ 
\text{ and for all }
\phi\in 
{C^1(\R)}.
$$
{The previous assumption implies that} no analyst assigns a one-year target price for the S\&P 500 index equal to zero or infinity.

From \eqref{e:model}, we deduce formally that
\begin{equation}
 \label{e:model-weak}
 \begin{array}{l}
\displaystyle \frac{\mathrm{d}}{\dt}\int_{\R_+} f(t,x) \phi(x) \dx + \alpha \int_{\R_+}  \big((s(t) - x)f\big) \partial_x \phi(x) \dx  = %\\[10pt]
\displaystyle \beta \int_{\R_+} f(t,x) \left [ \phi(\bar{x}) -\phi(x) \right] \dx
 \end{array}
\end{equation}
for all 
$\phi\in 
{C^1(\R)}$.
%$\phi\in {C_c^1(\R; C^1( \R))}$.

Denote
$$
m_0(t)= \int_{\R_+} f(t,x)  \dx \ \text{ and } \ m_1(t)= \int_{\R_+} x f(t,x)  \dx.
$$
When $\phi=1$, we deduce from the semi-weak form that
$$
\displaystyle \frac{\mathrm{d}}{\dt}m_0(t) =0,
$$
i.e.
$$
m_0= \int_{\R_+} f^{\text{in}}(x) \dx 
$$
for all $t\in\R_+$.

On the other hand, when $\phi=x$, the semi-weak form \eqref{e:model-weak} reduces to the following ordinary differential equation
governing the time evolution of $m_1$:
$$
\displaystyle \frac{\mathrm{d}}{\dt}m_1(t)  = 
 -q\beta m_1(t) + q\beta X(t)(1+\Delta)m_0.
$$
If we divide by the constant $m_0$, we deduce an ordinary differential equation which describes
the time evolution of the sentiment of the 
{analysts'} population:
$$
\displaystyle \frac{\mathrm{d}}{\dt}s(t)  = 
 q\beta \big [ X(t)(1+\Delta)-s(t)\big].
$$
This equation can be explicitly solved. Its solution is:
\begin{equation}
\label{e:s}
s(t) = \left(\int_{\mathbb{R}_+}x f^{\text{in}}(x) \mathrm{d}x\right)\left(\int_{\mathbb{R}_+}f^{\text{in}}(x) \mathrm{d}x\right)^{-1}
e^{-q\beta t} +
q \beta \int_0^t X(s)(1+\Delta)e^{q \beta(s-t)}  \mathrm{d}s.
\end{equation}
Hence, from now on, Equation \eqref{e:s} will replace Equation \eqref{e:s_old} within the initial-boundary value problem \eqref{e:model}.

%%%%%%%%%%%%
\subsection*{Variance and risk implications}
%%%%%%%%%%%%

A key advantage of the kinetic model is that it does not only provide the mean behavior of the system, but it also encapsulates richer distributional information. For instance, it allows us to compute higher-order moments such as the variance, as detailed below, which are crucial for understanding uncertainty and predicting risk or volatility.

In order to compute the variance, we begin with introducing the second-order moment
$$
m_2(t) = \int_{\mathbb{R}_+} x^2 f(t,x)\,\mathrm{d}x.
$$
By definition, the variance is given by
\begin{equation}\label{e:variance-def}
\mathcal{V}(t) = \frac{m_2(t)}{m_0} - \left( \frac{m_1(t)}{m_0} \right)^2,
\end{equation}
where $m_0$ and $m_1$ are the zeroth and first-order moments previously defined.

Applying the weak formulation \eqref{e:model-weak} with the test function $\phi(x)=x^2$, we obtain
$$
\frac{\mathrm{d}}{\mathrm{d}t} \int_{\mathbb{R}_+} x^2 f(t,x)\,\mathrm{d}x
+ \alpha \int_{\mathbb{R}_+} (s(t)-x) f(t,x)\,(2x)\,\mathrm{d}x
= \beta \int_{\mathbb{R}_+} f(t,x)\,\left(\bar{x}^2 - x^2\right)\,\mathrm{d}x,
$$
i.e.
$$
\frac{\mathrm{d}}{\mathrm{d}t} m_2(t)+
2\alpha \big[ s(t) m_1(t) - m_2(t) \big])
= \beta \left[(q^2 - 2q)\,m_2(t) + 2q(1-q)X(t)(1+\Delta)m_1(t)
    + q^2 X(t)^2(1+\Delta)^2 m_0\right],
$$
where we used that the quantities $q$, $X$ and $\Delta$ are independent of $x$.

Since $s(t)=m_1(t)/m_0$, we rewrite the previous equation as
\begin{equation}\label{e:m2-ODE}
\frac{\mathrm{d}}{\mathrm{d}t} m_2(t)
= \left[2\alpha + \beta(q^2-2q)\right] m_2(t)
- 2\alpha \frac{m_1(t)^2}{m_0} + 
2\beta {q} (1-q)X(t)(1+\Delta) m_1(t) 
+\beta q^2 X(t)^2(1+\Delta)^2 m_0.
\end{equation}

Combining \eqref{e:variance-def} and \eqref{e:m2-ODE}, using the ODE for $m_1$ and the definition of $s$, given in \eqref{e:s_old}, we deduce that $\mathcal{V}$ satisfies
\begin{equation}\label{e:variance-ODE}
\frac{\mathrm{d}}{\mathrm{d}t}\,\mathcal{V}(t)
= \left [2\alpha+\beta(q^2-2q)\right]\mathcal{V}(t)
+ \beta (q^2{-2q}) s(t)^2
+ 2\beta {q}
(1-2q)\,X(t)(1+\Delta) s(t)
+ \beta q^2\,X(t)^2(1+\Delta)^2
\end{equation}
with initial condition
$$
\mathcal{V}(0) = \frac{m_2^{\mathrm{in}}}{m_0} - \left( \frac{m_1^{\mathrm{in}}}{m_0} \right)^2,
$$
where
$$
m_1^{\mathrm{in}}= \int_{\R_+} x f^{\mathrm{in}}(x) \dx 
\text{\quad and\quad  }
m_2^{\mathrm{in}}= \int_{\R_+} x^2 f^{\mathrm{in}}(x) \dx. 
$$
The solution of \eqref{e:variance-ODE} is
\begin{equation}\label{e:variance-solution}
\mathcal{V}(t) = e^{\gamma t}\left[
\mathcal{V}(0) + \int_0^t e^{-\gamma \tau}\left(
\beta (q^2{-2q}) s(\tau)^2
+ 2\beta
{q}(1-2q)\,X(\tau)(1+\Delta) s(\tau)
+ \beta q^2\,X(\tau)^2(1+\Delta)^2
\right)\,\mathrm{d}\tau
\right],
\end{equation}
where $s$ is given by \eqref{e:s} and
$$
\gamma = 
2\alpha+\beta(q^2-2q).
$$
We immediately see that the sign of the coefficient $\gamma$ determines the intrinsic stability of the variance dynamics. In particular, when $\gamma<0$,
the linear term in \eqref{e:variance-ODE} acts as a damping factor. If the forcing term vanishes (i.e., the integral term), the variance decays exponentially.
This corresponds to a mean-reverting behavior where dispersion and risk tend to stabilize over time.
    
On the other hand, if $\gamma>0$, the regime is unstable. When $2\alpha>\beta q(2-q)$, the internal interaction dominates the consensus mechanism, and the variance grows exponentially even in the absence of external shocks. This scenario reflects an intrinsically unstable system, where disagreement amplifies over time.

Therefore, the kinetic model does not only provide the mean sentiment 
(i.e. the average analysts' recommendation) but also its distributional spread, which is a key forward-looking indicator of risk and volatility. 
In financial applications, such dispersion-based measures have been shown to anticipate periods of elevated uncertainty
\cite{volatility}, making $\mathcal{V}(t)$ a natural proxy for volatility forecasting and risk management.

%%%%%%%%%%%%%%%%%%%%%%%%%%%
\section{Existence and uniqueness of the solution} \label{exuniq}
%%%%%%%%%%%%%%%%%%%%%%%%%%%

By developing the transport term, our non-local partial differential equation can be written in the following form:
$$
\displaystyle \partial_t f  + \alpha  \left( s(t)  -  x\right ) \partial_x f =
 \frac \beta {1- q}  f\left (\frac x {1-q}  -\frac q{1-q} X(t)(1+\Delta)\right)\mathbf{1}_{\left (\frac x {1-q}  -\frac q{1-q} X(t)(1+\Delta)\geq 0 \right) }
 +(\alpha-\beta) f(t,x).
$$
Thanks to the method of characteristics, the previous equation can be written in integral form:
\begin{equation}
 \label{e:integral_form}
 \begin{array}{l}
\displaystyle
f(t,x)= f^{\text{in}}\left (xe^{\alpha t} - \alpha \int_0^t s(\theta) e^{\alpha \theta}\mathrm{d}\theta
\right)
e^{(\alpha-\beta)t} \mathbf{1}_{\left[\left (xe^{\alpha t} - \alpha \int_0^t s(\theta) e^{\alpha \theta}\mathrm{d}\theta\right)\geq 0\right]}+
\\[10pt]
\displaystyle
 \frac \beta {1- q} \int_0^t  e^{(\beta-\alpha)(\theta-t)}
 f\left (\frac x {1-q}  -\frac q{1-q} X(\theta)(1+\Delta)\right)\mathbf{1}_{\left (\frac x {1-q}  -\frac q{1-q} X(\theta)(1+\Delta)\geq 0\right)}
 \mathrm{d} \theta,
\end{array}
\end{equation}
with $s$ given by \eqref{e:s}.

Let $T \in\R_+^*$ and $C_b([0,T]\times\R_+)$ the space of uniformly bounded continuous functions on $[0,T]\times\R_+$,
such that
$$
\Vert f \Vert_{C_b([0,T]\times\R_+)} =\max_{(t,x)\in [0,T]\times\R_+} |f(t,x)| < +\infty.
$$
The following result holds:
\begin{Thm}
The integral problem \eqref{e:integral_form}, with initial condition $f^{\text{in}}\in C_b(\R_+)$, has one and only one solution $f\in C_b([0,T]\times\R_+)$.
\end{Thm}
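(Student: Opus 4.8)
The plan is to read \eqref{e:integral_form} as a fixed-point equation $f=\mathcal{T}f$ on the Banach space $C_b([0,T]\times\R_+)$ and to solve it by a Picard/Banach contraction argument. Here $\mathcal{T}$ is the affine operator
$$(\mathcal{T}f)(t,x) = F(t,x) + \frac{\beta}{1-q}\int_0^t e^{(\beta-\alpha)(\theta-t)}\, f\!\left(\theta, \tfrac{x}{1-q}-\tfrac{q}{1-q}X(\theta)(1+\Delta)\right)\mathbf{1}_{(\,\cdots\,\geq 0)}\,\mathrm{d}\theta,$$
where $F(t,x)$ denotes the first (data-dependent) term of \eqref{e:integral_form}, built from $f^{\text{in}}$ along the backward characteristic. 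Note that the sentiment $s$ entering $F$ is the \emph{explicit} function \eqref{e:s}, which is continuous and uniformly bounded on $[0,T]$ because $X$ is assumed continuous and uniformly bounded; hence $t\mapsto\int_0^t s(\theta)e^{\alpha\theta}\,\mathrm{d}\theta$ is $C^1$ and the backward characteristic map is jointly continuous in $(t,x)$.

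First I would check that $\mathcal{T}$ maps $C_b([0,T]\times\R_+)$ into itself. Boundedness is immediate on a finite horizon: the exponential weights are controlled by $e^{|\alpha-\beta|T}$, so $\|\mathcal{T}f\|_{C_b}\le \|f^{\text{in}}\|_\infty e^{|\alpha-\beta|T} + \frac{\beta}{1-q}\,T\, e^{|\alpha-\beta|T}\|f\|_{C_b}$. Continuity is the delicate point, because of the indicator $\mathbf{1}_{(\cdots\ge 0)}$, whose integrand jumps across the curve $x=qX(\theta)(1+\Delta)$. In the integral term this causes no loss of continuity: as $x\to x_0$ the integrand converges for almost every $\theta$ and stays uniformly bounded, so dominated convergence yields joint continuity in $(t,x)$. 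For the non-integrated term $F$ the jump of the indicator occurs exactly where the characteristic argument vanishes, i.e. where $f^{\text{in}}$ is evaluated at $0$; the compatibility $f^{\text{in}}(0)=0$ inherited from the boundary condition $f(t,0)=0$ then makes $F$ continuous.

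Next I would establish the Lipschitz estimate. Since $\mathcal{T}$ is affine, the data term cancels and
$$\bigl|(\mathcal{T}f-\mathcal{T}g)(t,x)\bigr|\le L\int_0^t \|f(\theta,\cdot)-g(\theta,\cdot)\|_\infty\,\mathrm{d}\theta, \qquad L:=\frac{\beta}{1-q}e^{|\alpha-\beta|T}.$$
Iterating this inequality produces the standard factorial gain $\|\mathcal{T}^n f-\mathcal{T}^n g\|_{C_b}\le \frac{(LT)^n}{n!}\,\|f-g\|_{C_b}$, so for $n$ large enough $\mathcal{T}^n$ is a strict contraction on $C_b([0,T]\times\R_+)$. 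By the generalized Banach fixed-point theorem (a map one of whose iterates is a contraction has a unique fixed point), $\mathcal{T}$ admits a unique fixed point $f\in C_b([0,T]\times\R_+)$, which is the unique solution of \eqref{e:integral_form}. Because $\frac{(LT)^n}{n!}\to 0$ holds directly on the full interval for the given $T\in\R_+^*$, no separate continuation step is required.

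The step I expect to be the main obstacle is the self-map verification, specifically the joint continuity of $\mathcal{T}f$ across the indicator's jump; the contraction estimate itself is routine once the exponential weights are bounded on $[0,T]$. I would therefore concentrate the effort on making the dominated-convergence argument for the integral term rigorous and on pinning down the compatibility condition $f^{\text{in}}(0)=0$ needed for the continuity of the free term $F$.
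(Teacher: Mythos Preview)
Your approach is essentially the one in the paper: both rewrite \eqref{e:integral_form} as a fixed-point problem and exploit the factorial gain $\dfrac{(LT)^n}{n!}$ obtained by iterating the time integral. The paper packages this as a Neumann series $f=\sum_{n\ge 0}\mathscr{T}^n F(f^{\text{in}})$ and proves uniqueness by the same iterated estimate, whereas you invoke the ``some iterate is a contraction'' form of Banach's theorem; these are equivalent.

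The one substantive difference is that you do more, not less: the paper's proof simply asserts that $\mathscr{T}$ maps $C_b([0,T]\times\R_+)$ to itself and that $F(f^{\text{in}})\in C_b$, without discussing the discontinuity of the indicator. Your dominated-convergence argument for the integral term and your observation that continuity of the free term $F$ across the characteristic boundary requires the compatibility $f^{\text{in}}(0)=0$ (consistent with the boundary condition $f(t,0)=0$ in \eqref{e:model}) fill a gap the paper leaves implicit. Your instinct to concentrate on the self-map verification is therefore well placed; the contraction estimate itself matches the paper's argument almost line for line.
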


\begin{proof}
Denote with
$$
F(f^{\text{in}})= f^{\text{in}}\left (xe^{\alpha t} - \alpha \int_0^t s(\theta) e^{\alpha \theta}\mathrm{d}\theta
\right)
e^{(\alpha-\beta)t} \mathbf{1}_{\left[\left (xe^{\alpha t} - \alpha \int_0^t s(\theta) e^{\alpha \theta}\mathrm{d}\theta\right)\geq 0\right]}
$$
and
with $\mathscr{T}\, : \, C_b([0,T]\times\R_+) \to C_b([0,T]\times\R_+)$ the operator such that
$$\mathscr{T}g=
\frac \beta {1- q} \int_0^t  e^{(\beta-\alpha)(\theta-t)}
 g\left (\frac x {1-q}  -\frac q{1-q} X(\theta)(1+\Delta)\right)\mathbf{1}_{\left (\frac x {1-q}  -\frac q{1-q} X(\theta)(1+\Delta)\geq 0\right)}
 \mathrm{d} \theta,
$$
where $g\in C_b([0,T]\times\R_+)$.

Hence, Equation \eqref{e:integral_form} can be written as a fixed-point problem:
\begin{equation}
\label{e:fixed_point}
f= F(f^{\text{in}})+\mathscr{T}f
\end{equation}
in $C_b([0,T]\times\R_+)$.
In what follows, we show that \eqref{e:fixed_point} has one and only one solution in $C_b([0,T]\times\R_+)$.

We first notice that
$$
\Vert F(f^{\text{in}})
\Vert_{C_b([0,T]\times\R_+)}  \leq 
\Vert  f^{\text{in}} \Vert_{C_b([0,T]\times\R_+)} e^{|\alpha-\beta| T}.
$$

On the other hand, the operator $\mathscr{T}$ is clearly linear and is bounded in $C_b([0,T]\times\R_+)$ with bounded norm 
$\Vert \mathscr{T} \Vert_{\mathcal{L}(C_b([0,T]\times\R_+))}$. Indeed,
$$
\Vert \mathscr{T}g \Vert_{C_b([0,T]\times\R_+)}  \leq 
\Vert g \Vert_{C_b([0,T]\times\R_+)} \left( \frac {\beta }{1- q} \right) \left(  \frac{1- e^{-(\beta-\alpha) T}}{\beta-\alpha} \right)
$$
for all $g\in C_b([0,T]\times\R_+)$.

Hence,
$$
\Vert \mathscr{T} \Vert_{\mathcal{L}(C_b([0,T]\times\R_+))} = \sup_{\Vert g \Vert_{C_b([0,T]\times\R_+)} =1} 
\Vert \mathscr{T}g \Vert_{C_b([0,T]\times\R_+)}  \leq 
\left( \frac {\beta }{1- q} \right) \left(  \frac{1- e^{-(\beta-\alpha) T}}{\beta-\alpha} \right) .
$$

The next step consists in evaluating the norm of $\mathscr{T}^n$, for all $n\in \mathbb{N}^*$.
By estimating the integral in a slightly different way, we have that
\begin{equation}
\label{e:estimate}
\begin{array}{c}
\displaystyle  \Vert \mathscr{T}^n g \Vert_{C_b([0,T]\times\R_+)}  \leq \left( \frac {\beta }{1- q} \right)^n 
\Vert g \Vert_{C_b([0,T]\times\R_+)} 
\int_0^t\int_0^{t_1}\cdots\int_0^{t_{n-1} }\mathrm{d}t_1 \mathrm{d}t_1 \cdots \mathrm{d}t_n \leq \\[10pt]
\displaystyle  \left( \frac {\beta }{1- q} \right)^n 
\Vert g \Vert_{C_b([0,T]\times\R_+)} \frac{T^n}{n!}.
\end{array}
\end{equation}
We now claim that
$$
f(t,x)=\sum_{n=0}^{+\infty} \mathscr{T}^n F(f^{\text{in}})
$$
is a well defined function of $C_b([0,T]\times\R_+)$ and that it solves the fixed-point problem \eqref{e:fixed_point}.
We indeed have that
\begin{equation*}
\begin{array}{c}
\displaystyle \Vert f \Vert_{C_b([0,T]\times\R_+)} \leq \displaystyle \sum_{n=0}^{+\infty} 
\Vert \mathscr{T} ^n \Vert_{\mathcal{L}(C_b([0,T]\times\R_+))}
\Vert F(f^{\text{in}}) \Vert_{C_b([0,T]\times\R_+)} \leq \\[10pt]
\displaystyle  \sum_{n=0}^{+\infty} 
\Vert  f^{\text{in}} \Vert_{C_b([0,T]\times\R_+)} e^{|\alpha-\beta| T}
\left( \frac {\beta T}{1- q} \right)^n 
\frac1 {n!}=
\Vert  f^{\text{in}} \Vert_{C_b([0,T]\times\R_+)} e^{|\alpha-\beta| T}\exp \left( \frac {\beta}{1- q} T\right).\\
\end{array}
\end{equation*}
Hence the series is normally convergent with respect to the natural norm of $C_b([0,T]\times\R_+)$. Consequently, $f$ belongs to $C_b([0,T]\times\R_+)$. 

Moreover, $f$ solves \eqref{e:fixed_point} because
$$
f(t,x)=\sum_{n=0}^{+\infty} \mathscr{T}^n F(f^{\text{in}})=  F(f^{\text{in}}) + \sum_{n=1}^{+\infty} \mathscr{T}^n F(f^{\text{in}})=
F(f^{\text{in}}) +  \mathscr{T} \sum_{n=0}^{+\infty} \mathscr{T}^n F(f^{\text{in}}) = F(f^{\text{in}}) +  \mathscr{T} f.
$$
Uniqueness follows by contradiction. Suppose that there exist two distinct solutions $f_1$ and $f_2$ of \eqref{e:fixed_point}.

Hence, their difference $\tilde f = f_1-f_2$ has finite and non-zero norm $\Vert \tilde f \Vert_{C_b([0,T]\times\R_+)}$. By linearity,  $\tilde f$ solves
$$
\tilde f= F(f^{\text{in}}) + \mathscr{T}\tilde  f,
\qquad f^{\text{in}}=0.
$$
By iteration we have that
$$
\tilde f=  \mathscr{T}\tilde  f = \dots = \mathscr{T}^n\tilde f
$$
for all $n\in\mathbb{N}^*$ and, by passing to the norm, we have
$$
 \Vert \tilde f \Vert_{C_b([0,T]\times\R_+)}=  
 \Vert \mathscr{T}^n\tilde  f \Vert_{C_b([0,T]\times\R_+)}
$$
for all $n\in \mathbb{N}^*$. Hence,
$$
 \Vert \tilde f \Vert_{C_b([0,T]\times\R_+)}= 0 \text{ or } 
 \Vert \mathscr{T}^n\tilde  f \Vert_{C_b([0,T]\times\R_+)}.
$$
The estimate given in \eqref{e:estimate} guarantees that $ \Vert \mathscr{T^n}\tilde  f \Vert_{C_b([0,T]\times\R_+)}
\to 0 $ when $n\to +\infty$, and hence this contradict the fact that $f_1$ and $f_2$ are distinct solutions of the fixed-point problem
\eqref{e:fixed_point}.
\end{proof}

%%%%%%%%%%%%%%%%%%%%%%%%%%%
\section{Numerical tests} \label{s:numerical}
%%%%%%%%%%%%%%%%%%%%%%%%%%%

In this section we provide some numerical simulations which describe the behaviour of the model and compare it with the actual data.

%%%%%%%%%%%%%%%%%%%%%%%%%%%
\subsection{Time evolution of the price sentiment} \label{ss:sentiment}
%%%%%%%%%%%%%%%%%%%%%%%%%%%

We study the problem
\begin{equation}
\label{e:test}
\left\{
\begin{array}{l}
\displaystyle \frac{\mathrm{d}}{\dt}s(t)  = 
 q\beta \big [ X(t)(1+\Delta)-s(t)\big]\\[10pt]
\displaystyle s(0)= s^{\text{in}} =  \left(\int_{\mathbb{R}_+}x f^{\text{in}}(x) \mathrm{d}x\right)\\
\end{array}
\right.
\end{equation}
with given
$\Delta\in\R$, which has the explicit solution
$$
\displaystyle s(t) =  \left(\int_{\mathbb{R}_+}x f^{\text{in}}(x) \mathrm{d}x\right)\left(\int_{\mathbb{R}_+}f^{\text{in}}(x) \mathrm{d}x\right)^{-1}\ e^{-q\beta t} +
q \beta \int_0^t X(\theta)(1+\Delta)e^{q \beta(\theta-t)}  \mathrm{d} \theta
$$
We have developed a \texttt{MATLAB} script that implements a numerical solution for the differential equation \eqref{e:test} using a standard fourth-order Runge-Kutta method. We recall that the spot price \( X(t) \) is derived from time-series data of the S\&P 500 index, covering the period from May 26th, 2009, to  {June 2nd, 2025}.

To calibrate the model parameters and assess its accuracy, we divided the dataset into two subsets. The first subset, referred to as the training set, includes S\&P 500 index data from May 26th, 2009, to December 31st, 2016. For the sentiment data $s_{\text{meas}}(t)$, we used the average of one-year analysts' forecasts for the S\&P 500 index over the same period. The training set consists of 1,987 data points.

These training data were used to optimize the model's free parameters ($q$, $\beta$ and $\Delta$) using a combination of global and local optimization techniques. Specifically, a global optimizer was employed to explore the parameter space broadly, followed by a local optimizer for fine-tuning. The distance metric used in the optimization process was the standard Euclidean distance between the forecast vector $s(t)$ and the measured time series
 $s_{\text{meas}}(t)$. The optimized parameters were determined as $q = 0.28$, $\beta = 6.05$ and $\Delta = 0.143$, which were subsequently used for forecasting.

It is important to note that, since 2009, the S\&P 500 index has experienced significant growth interspersed with periods of volatility. Our dataset begins at the point when the index started recovering from the 2008 financial crisis. By 2013, the index had surpassed its pre-crisis levels and continued to grow steadily until 2019, despite challenges such as trade tensions and global economic uncertainties.

In 2020, the COVID-19 pandemic caused a sharp decline, with the index losing approximately 34\% of its value by March. However, it recovered quickly due to government stimulus and strong performance in technology and healthcare sectors. In 2022, inflationary pressures and geopolitical tensions, including the war in Ukraine, triggered market corrections.

As a result, our training set is based on data from a period of market growth and does not include data from more volatile periods.

Using the optimized parameters, we simulated the period from January 3rd, 2017, to {June 2nd, 2025}, and compared the results with the corresponding time series, consisting of {2,193} data points.

Figure \ref{fig:2} presents the results of our model compared to the average of one-year analysts' forecasts for the forecasting set, which spans from January 3rd, 2017, to {June 2nd, 2025}, and to the cointegration model equilibrium which parameters are estimated over the same period as the kinetic model. For this simulation, the sentiment $s(0)$ was initialized using the value computed on January 3, 2017, $s(0) = 2463.93$.  It is clear that the kinetic model better captures the long-term dynamics compared to the cointegration model. Moreover, while the long-term equilibrium estimated by the cointegration model is much more volatile, closely mirroring the fluctuations of the S$\&$P 500 (due to $\alpha$ being near 1), the kinetic model does not suffer from the same volatility issues.

\begin{figure}[h!]
\begin{center}
\includegraphics[height=9cm]{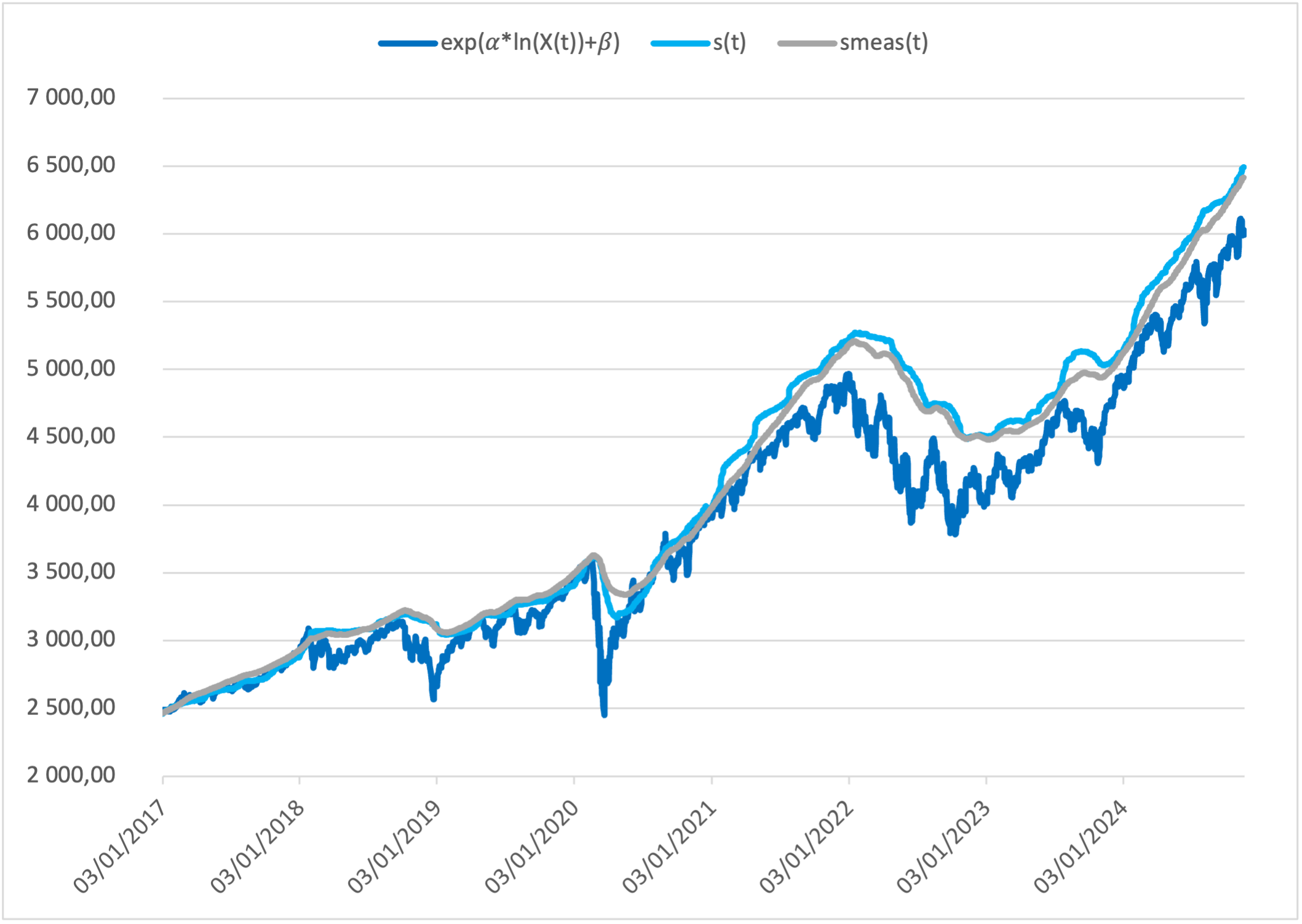}
\caption{ 
Forecasts $ s(t) $  of the kinetic model {(black line)}, Forecasts $ \exp{(\alpha\ln (X(t))+ \beta)}$  of the cointegration model {(grey line)} and the time series of the average of one-year analysts' forecasts for the S\&P 500 index $ s_{\text{meas}}(t)$  {(light grey line)}, from January 3rd, 2017, to  {June 2nd, 2025}.} 
\label{fig:2}
\end{center}
\end{figure}

Finally, in Figure \ref{fig:3}, we plot the relative error between the forecasts generated by our model and the real data for the average of one-year analysts' forecasts.

\begin{figure}
\begin{center}
\includegraphics[height=9cm]{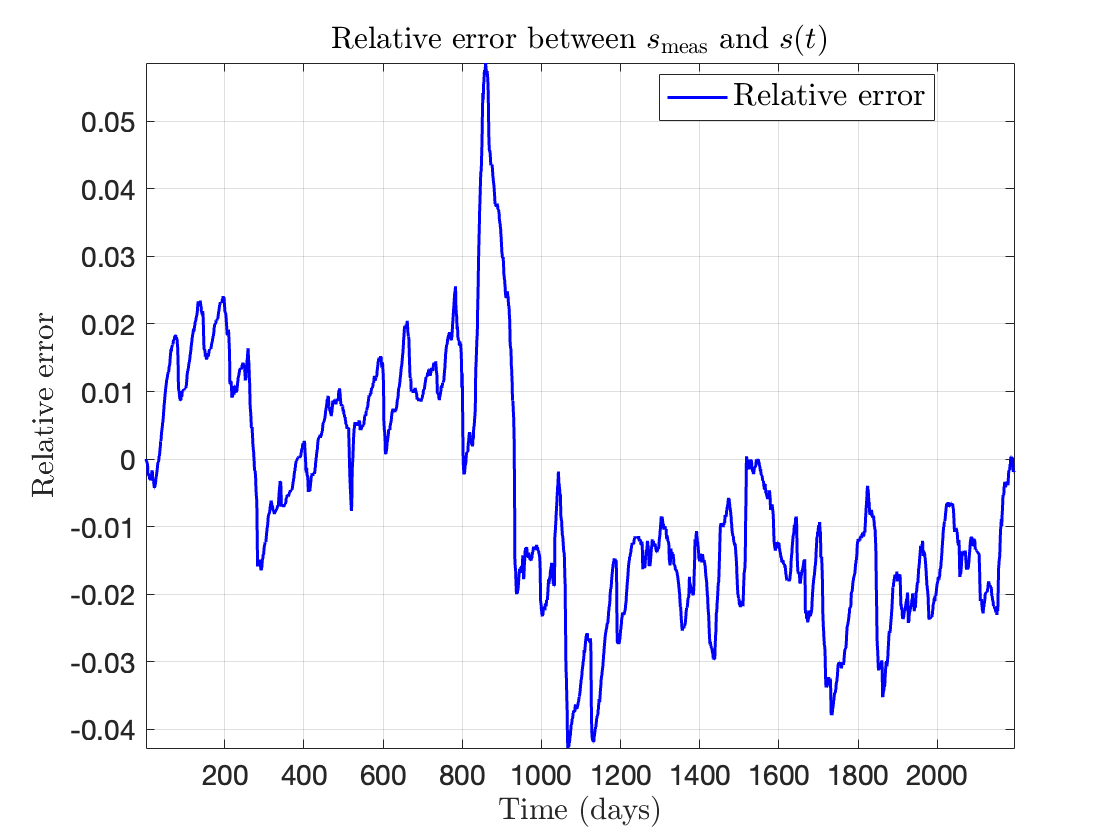}
\caption{Analysis of the relative error between the forecasts $s(t)$ of the model and the time series of the average of one-year analysts' forecasts for the S\&P 500 index $s_{\text{meas}}$, from January 3rd, 2017, to {June 2nd, 2025.}} 
\label{fig:3}
\end{center}
\end{figure}

The statistical analysis of the relative errors reveals several insights into the robustness of the forecast. The mean relative error is equal to {0,015485}, while the median is equal to {0,014096}, indicating a slight systematic {over}estimation. This bias can be the consequence of the different market trends of the training set with respect to the forecasting set. The standard error, {0,000213}, reflects a low level of dispersion, showing that the predictions are consistently close to the actual values. 

The time series visualization of the relative errors shows how the errors fluctuate over time, with most values remaining close to zero. This consistency highlights the robustness of the forecast within a time horizon of approximately seven years and suggests that the model performs reliably across different periods.

One such event is the March 2020 market crash, triggered by the COVID-19 pandemic, which was followed by a rapid and sustained rally due to fiscal stimulus, monetary easing, and large-scale asset purchases by the Federal Reserve. Our model tended to overestimate forecasts before the crash and underestimate them afterward, reflecting a shift in market dynamics and a moderate increase in analysts' overall optimism post-crash.

The model's parameters were originally estimated using data up to 2016. To improve forecasting accuracy, a more dynamic calibration approach could be adopted, such as periodic updates of the parameters incorporating the entire historical dataset or a rolling window, for example of the past five years, to recalibrate at each time step.

Future research will further explore these methodologies, extending the analysis to other market indices to assess the generalisability of the findings.

%%%%%%%%%%%%%%%%%%%
\section{Concluding remarks}

Our study indicates that the analysts' consensus for the one-year S\&P 500 price target can be forecast by assuming it is primarily driven by the current index level, supplemented by a premium 
(of approximately 10$\%$).
While some individual analysts may possess genuine predictive power, this is clearly not the case for the 
{aggregated one-year price forecasts.}
%consensus as a whole. 

This initial intuition was informed by the VECM analysis, but it is the kinetic model that allows for a precise description of how analysts weigh the consensus opinion relative to the current price. Crucially, it appears analysts place greater weight on the market price than on the opinions of their peers.

This comparative analysis decisively establishes the kinetic model's superiority over the traditional VECM for this specific forecasting task. The VECM is severely limited by the necessity of restrictive assumptions like normality and homoscedasticity -- assumptions frequently violated in volatile equity data -- which cause it to suffer from significant forecasting deviations.

In sharp contrast, the kinetic model is inherently more robust: its design specifically targets out-of-equilibrium phenomena, requiring no underlying distributional hypotheses. Consequently, the kinetic model provides highly stable and convincing forecasts, maintaining its efficacy even during chaotic and unpredictable market periods and over a long-term horizon.

\section*{Use of AI tools declaration}
The English in this paper has been refined using Artificial Intelligence tools.

\section*{Acknowledgments}
FS acknowledges the support of INdAM, GNFM group.

\section*{Conflict of interest}

The authors state that there are no conflicts of interest nor ethical issues.

%%%%%%%%%%%%%%%%%%%%%%%

\end{document}